\newtheorem{theorem}{Theorem}[section]
\newtheorem{proposition}[theorem]{Proposition}
\newtheorem{lemma}[theorem]{Lemma}
\theoremstyle{definition}
\theoremstyle{remark}
\numberwithin{equation}{section}
\newcommand{\abs}[1]{\left\vert#1\right\vert}
\newcommand{\set}[1]{\left\{#1\right\}}
\newcommand{\proin}[2]{\left<#1,#2\right>}
\newcommand{\norm}[1]{\left\Vert#1\right\Vert}
\DeclareRobustCommand{\loplus}{\mathbin{\mathpalette\dog@lsemi{+}}}
\DeclareRobustCommand{\lotimes}{\mathbin{\mathpalette\dog@lsemi{\times}}}
\DeclareRobustCommand{\roplus}{\mathbin{\mathpalette\dog@rsemi{+}}}
\DeclareRobustCommand{\rotimes}{\mathbin{\mathpalette\dog@rsemi{\times}}}
\newcommand{\dog@rsemi}[2]{\dog@semi{#1}{#2}{-90,90}}
\newcommand{\dog@lsemi}[2]{\dog@semi{#1}{#2}{270,90}}
\newcommand{\dog@semi}[3]{%
  \begingroup
  \sbox\z@{$\m@th#1#2$}%
  \setlength{\unitlength}{\dimexpr\ht\z@+\dp\z@\relax}%
  \makebox[\wd\z@]{\raisebox{-\dp\z@}{%
    \begin{picture}(1,1)
    \linethickness{\variable@rule{#1}}
    \roundcap
    \put(0.5,0.5){\makebox(0,0){\raisebox{\dp\z@}{$\m@th#1#2$}}}
    \put(0.5,0.5){\arc[#3]{0.7}}
    \end{picture}%
  }}%
  \endgroup
}
\newcommand{\variable@rule}[1]{%
  \fontdimen8
  \ifx#1\displaystyle\textfont3\else
    \ifx#1\textstyle\textfont3\else
      \ifx#1\scriptstyle\scriptfont3\else
        \scriptscriptfont3\relax
  \fi\fi\fi
}
\begin{document}
\title[]{Kirchhoff divergence and diffusions associated to transport probability measures}
%


\author[]{Hugo Aimar}
\email{haimar@santafe-conicet.gov.ar}
\author[]{Ivana G\'{o}mez}
\email{ivanagomez@santafe-conicet.gov.ar}
\thanks{This work was supported by the Ministerio de Ciencia, Tecnolog\'ia e Innovaci\'on de la Naci\'on-MINCYT in Argentina: CONICET and ANPCyT; and UNL}
\subjclass[2010]{Primary 35K90}


\keywords{Transport, Laplacian, Diffusion}

\begin{abstract}
	Taking as starting point the approach to the divergence operator on weighted graphs, we give a notion of divergence associated to transport coupling and coupled measures on locally compact Hausdorff spaces. We consider the induced Laplacian operator and the corresponding heat diffusion operators in some particular instances.
\end{abstract}
\maketitle

\section{Introduction}

In the classical approach in the $n$-dimensional Euclidean space $\mathbb{R}^n$ as the iteration of the operator nabla, $\Delta= div\,grad\,=\nabla^2$, the Laplacian, can be seen as the composition of two differential operators each of first order of differentiation. The gradient acting on scalars and the divergence on vectors. The intrinsic definitions of these operators are both based in the geometry of the Euclidean space. In fact, the gradient of a scalar function $u$ at a point $P$ in $\mathbb{R}^n$, is the vector with the direction of the maximal growth of $u$ from $P$ and length equal to the rate of growth of $u$ in that maximal direction. The divergence of a vector field $F$ at a point $P$ of the space is given by the outer flow of $F$, from $P$, per unit volume. Sometimes some problems of potential analysis are posed on sets without any a priori geometric or algebraic structure and there is no other way to measure a gradient of a potential $u$, than the mere difference $u(y)-u(x)$ for any two different points $x$ and $y$ of the domain of $u$. A classical instance of this situation is provided by the Kirchhoff laws in the theory of electric circuits. A resistive circuit of $n$ nodes, $\set{1,2,\ldots, n}$ can be schematically seen as a weighted graph. Assume that $R_{ij}$ is the electrical resistance of the connection between nodes $i$ and $j$ of the circuit. If we admit for each $R_{ij}$ any nonnegative value  including $+\infty$, and $\Phi_{ij}$ is the potential difference between nodes $i$ and $j$, from Ohm's law, the sum of all current intensities is given at each node $i=1,2,\ldots,n$, by $(K\Phi)_i=\sum_{j=1}^n \frac{\Phi_{ij}}{R_{ij}}=\sum_{j=1}^n w_{ij}\Phi_{ij}$, where $w_{ij}=\frac{1}{R_{ij}}$. A function $\Phi$ defined on the edges of the graph will satisfy the first Kirchhoff law if $K\Phi=0$. The first Kirchhoff law can be seen as a conservation law in the sense that at each node of the electrical circuit all incoming electrical currents have to compensate with all out-coming electrical currents. In this sense the operator $K$ acting on functions $\Phi$ defined on the edges of the graph, can be seen as a divergence operator. When $\Phi$ is $\nabla u$, the ``naive'' gradient of $u$, $(\nabla u)_{ij}=u_j-u_i$ and $u$ is any function defined on the nodes of the graph, we have the Laplacian operator $\Delta = K\nabla$. In particular, $u$ is harmonic ($\Delta u=0$) if and only if $u$ satisfies the mean value identity at each node, i.e. $u_i =\sum_{j=1}^n w_{ij}u_j$, whose global or local characters depend on the concentration of the nonvanishing terms of the matrix $w_{ij}$. For a complete analysis see \cite{DoyleSnellBook84}.

Let us start by some basic abstract definitions. In the following sections we shall profusely exemplify and illustrate the general framework.

Let $X$ be a locally compact Hausdorff space. Set $\mathscr{C}_c(X)$ and $\mathscr{C}_c(X\times X)$ to denote the spaces of real  valued continuous functions with compact support, defined on $X$ and $X\times X$ respectively. We shall use capital Greek letters $\Phi=\Phi(x,y)$ to denote real functions defined on $X\times X$ and small Greek letters $\varphi=\varphi(x)$ to denote real functions defined on $X$. Let $\mu$ be a Borel probability measure on $X$ and $\pi$ be a Borel probability measure on $X\times X$. For a given $\Phi\in\mathscr{C}_c(X\times X)$ we say that a function $\psi$ defined on $X$ is a \textbf{Kirchhoff divergence of $\Phi$ with respect to $\mu$ and $\pi$} if the equation
\begin{equation}\label{eq:KichDivOperator}
\int_X \varphi(x)\psi(x) d\mu(x) = \iint_{X\times X} \varphi(x)\Phi(x,y) d\pi(x,y)
\end{equation}
holds for every $\varphi$ in $\mathscr{C}_c(X)$. It is not difficult to provide examples showing that solutions of \eqref{eq:KichDivOperator} may not exist and may not be unique. For nonexistence take $X=[0,1]$, $\mu=\delta_0$ the Dirac delta at the origin, $d\pi= dx dy$ the area in the unit square and $\Phi\equiv 1$. To show a case of non-uniqueness take again $X=[0,1]$, $\mu=\delta_0$, $\Phi\equiv 1$ and $\pi=\delta_0\times \delta_0$.

Given a finite measure $\pi$ on $X\times X$, as usual, we denote with $\pi^i$, $i=1,2$; the two marginal distributions of $\pi$, for $B$ a Borel set in $X$, $\pi^1(B)=\pi(B\times X)$ and $\pi^2(B)=\pi(X\times B)$.

Radon-Nikodym Theorem provides a simple criteria for existence and uniqueness, up to null sets, of a solution $\psi$ of \eqref{eq:KichDivOperator} which shall be enough for our further work. Given $\Phi\in\mathscr{C}_c(X\times X)$ and $\pi$ a probability on $X\times X$, we write $\pi_\Phi$ to denote the measure $d\pi_\Phi(x,y)=\Phi(x,y)d\pi(x,y)$ and $\pi^1_\Phi$ and $\pi^2_\Phi$ to denote the first and second marginals of $\pi_\Phi$.
\begin{proposition}
	Let $X$ be a locally compact Hausdorff space. Let $\mu$ be a Borel probability on $X$, $\pi$ a Borel probability on $X\times X$ and $\Phi\in\mathscr{C}_c(X\times X)$. If the first marginal $\pi^1_\Phi$ of $\pi_\Phi$ is absolutely continuous with respect to $\mu$, then, the Radon-Nikodym derivative of $\pi^1_\Phi$ with respect to $\mu$, $\frac{d\pi^1_\Phi}{d\mu}$, solves \eqref{eq:KichDivOperator}.
\end{proposition}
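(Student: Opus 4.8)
The plan is to take $\psi=\dfrac{d\pi^1_\Phi}{d\mu}$ and verify \eqref{eq:KichDivOperator} by chaining three elementary identities: the defining property of the Radon-Nikodym derivative, the description of the first marginal as the image measure of $\pi_\Phi$ under the coordinate projection, and the definition $d\pi_\Phi=\Phi\,d\pi$. First I would record the standing finiteness facts that make everything licit. Since $\Phi\in\mathscr{C}_c(X\times X)$ is continuous with compact support it is bounded, so $\pi_\Phi$ is a finite \emph{signed} Borel measure on $X\times X$; consequently its first marginal $\pi^1_\Phi$ is a finite signed Borel measure on $X$, and the hypothesis $\pi^1_\Phi\ll\mu$ guarantees that $\psi$ exists and lies in $L^1(\mu)$.

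For the first identity, fix $\varphi\in\mathscr{C}_c(X)$; being continuous with compact support it is bounded and Borel, hence $\varphi\psi\in L^1(\mu)$. Applying the defining property of the Radon-Nikodym derivative to the Jordan decomposition of the signed measure $\pi^1_\Phi$ gives
\begin{equation*}
\int_X \varphi(x)\,\psi(x)\,d\mu(x) = \int_X \varphi(x)\,d\pi^1_\Phi(x).
\end{equation*}
The second identity rewrites the integral against the marginal as a double integral. Let $p_1\colon X\times X\to X$, $p_1(x,y)=x$, be the projection onto the first coordinate; by construction $\pi^1_\Phi$ is exactly the image measure of $\pi_\Phi$ under $p_1$. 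The change-of-variables formula for image measures, valid for every bounded Borel function and in particular for $\varphi\circ p_1$, together with the definition of $\pi_\Phi$, then yields
\begin{equation*}
\int_X \varphi(x)\,d\pi^1_\Phi(x) = \iint_{X\times X} \varphi(x)\,d\pi_\Phi(x,y) = \iint_{X\times X} \varphi(x)\,\Phi(x,y)\,d\pi(x,y).
\end{equation*}
Chaining the two displays produces precisely \eqref{eq:KichDivOperator}.

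The only point that deserves attention is this change-of-variables step for the signed measure $\pi_\Phi$. At the level of sets it is nothing more than the definition $\pi^1_\Phi(B)=\pi_\Phi(B\times X)$, i.e. the identity for indicator functions $\varphi=\mathbf{1}_B$; I would extend it to simple functions by linearity and then to the bounded Borel function $\varphi$ by a routine monotone-class or dominated-convergence argument, splitting $\pi_\Phi$ into its positive and negative parts so that the standard positive-measure statements apply. Since every measure in sight is finite, no convergence issue arises, and the proposition reduces to this definitional consistency check.
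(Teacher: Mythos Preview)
Your proof is correct and follows essentially the same route as the paper: the paper's argument also chains the Radon--Nikodym identity $\int_X\varphi\,\psi\,d\mu=\int_X\varphi\,d\pi^1_\Phi$ with the marginal/change-of-variables identity $\int_X\varphi\,d\pi^1_\Phi=\iint_{X\times X}\varphi\,\Phi\,d\pi$. Your version is simply more explicit about the signed-measure technicalities, but there is no substantive difference.
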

\begin{proof}
Notice that since $\pi_{\Phi}^1$ is the first marginal of $\pi_{\Phi}$, we have that $\int_X\varphi(x) d\pi_{\Phi}^1(x)= \iint_{X\times X}\varphi(x) d\pi_{\Phi}(x,y)=\iint_{X\times X}\varphi(x)\Phi(x,y) d\pi(x,y)$ for every $\varphi\in\mathscr{C}_c(X)$. On the other hand, since we are assuming $\pi_{\Phi}^1<<\mu$, we have
\begin{equation*}
\iint_{X\times X}\varphi(x)\Phi(x,y) d\pi(x,y) = \int_X\varphi(x)d\pi^1_\Phi(x) = \int_X\varphi(x)\frac{d\pi_{\Phi}^1(x)}{d\mu} d\mu(x)
\end{equation*}
for every $\varphi\in\mathscr{C}_c(X)$. Which gives \eqref{eq:KichDivOperator} with $\psi=\frac{d\pi_{\Phi}^1}{d\mu}$, as desired.
\end{proof}

A probability measure $\pi$ on $X\times X$ is said to be a coupling (see \cite{Villanibook}) between the probability measures $\mu$ and $\nu$ on $X$ if $\pi^1=\mu$ and $\pi^2=\nu$. If $\pi$ is a given probability on $X\times X$, then $\pi^1_\Phi$ is absolutely continuous with respect to $\pi^1=\mu$ and also $\pi^2_\Phi$ is absolutely continuous with respect to $\pi^2=\nu$, no matter what is the particular $\Phi\in\mathscr{C}_c(X\times X)$.

\begin{proposition}
	Let $\mu$ and $\nu$ be two probability measures on the Borel sets of $X$ and let $\pi$ be a coupling of $\mu$ and $\nu$. Then for every $\Phi\in\mathscr{C}_c(X\times X)$, the measures $\pi^1_\Phi$ and $\pi^2_\Phi$ are absolutely continuous with respect to $\mu$ and $\nu$ respectively.
\end{proposition}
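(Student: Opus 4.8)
The plan is to reduce the statement to a single uniform bound coming from the boundedness of $\Phi$. Since $\Phi\in\mathscr{C}_c(X\times X)$ is continuous with compact support it is bounded; set $M=\sup_{X\times X}\abs{\Phi}<\infty$. The coupling hypothesis will enter only to let me identify the marginals of $\pi$ itself with the prescribed measures, namely $\pi^1=\mu$ and $\pi^2=\nu$. I would treat the first marginal $\pi^1_\Phi$ in detail and invoke the evident symmetry (interchanging the two coordinates) to obtain the conclusion for $\pi^2_\Phi$.

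The one genuine subtlety is that $\Phi$ may take negative values, so $\pi_\Phi$, and with it its marginal $\pi^1_\Phi$, is in general a \emph{signed} measure; absolute continuity must therefore be understood through the total variation $\abs{\pi^1_\Phi}$, equivalently one must verify that every $\mu$-null set is $\abs{\pi^1_\Phi}$-null. This is the point I expect to require the most care, and it is precisely where the uniform bound $M$ does the work.

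Concretely, I would record that for any Borel set $A\subseteq X$,
\[
	\pi^1_\Phi(A)=\pi_\Phi(A\times X)=\iint_{X\times X}\mathbf 1_A(x)\,\Phi(x,y)\,d\pi(x,y),
\]
so that for any finite Borel partition $\set{A_i}$ of a set $B$ one has $\sum_i\abs{\pi^1_\Phi(A_i)}\le\iint \mathbf 1_B(x)\abs{\Phi(x,y)}\,d\pi(x,y)\le M\,\pi(B\times X)=M\,\pi^1(B)=M\,\mu(B)$. Taking the supremum over all such partitions yields $\abs{\pi^1_\Phi}(B)\le M\,\mu(B)$ for every Borel $B$. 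In particular $\mu(B)=0$ forces $\abs{\pi^1_\Phi}(B)=0$, which is exactly $\pi^1_\Phi\ll\mu$. The identical computation with the first coordinate replaced by the second, using $\pi^2=\nu$, gives $\pi^2_\Phi\ll\nu$, completing the argument.

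An alternative that sidesteps total variation altogether would be to split $\Phi=\Phi^+-\Phi^-$ into its continuous, compactly supported positive and negative parts; each of $\pi^1_{\Phi^\pm}$ is then a genuine nonnegative measure dominated by $M\mu$, and $\pi^1_\Phi=\pi^1_{\Phi^+}-\pi^1_{\Phi^-}$ is a difference of two measures absolutely continuous with respect to $\mu$. I would favor the total-variation line as the principal argument, since it produces the clean quantitative bound $\abs{\pi^1_\Phi}\le M\,\mu$, but the decomposition makes the underlying nonnegativity transparent and could be mentioned as a remark.
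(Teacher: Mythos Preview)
Your proof is correct and follows the same approach as the paper: use boundedness of $\Phi$ together with the coupling identity $\pi^1=\mu$ to bound the first marginal of $\pi_\Phi$ by $\norm{\Phi}_\infty\,\mu$. The paper's version is terser---it stops at the setwise inequality $\abs{\pi^1_\Phi(B)}\le\norm{\Phi}_\infty\,\mu(B)$, which already suffices for absolute continuity of a signed measure---so your passage through the total variation, while perfectly correct and yielding the sharper bound $\abs{\pi^1_\Phi}\le\norm{\Phi}_\infty\,\mu$, is a bit more than strictly needed.
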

\begin{proof}
	Since $\Phi$ is bounded, we have that for every Borel set $B$ in $X$,
\begin{align*}
	\abs{\pi^1_\Phi (B)} & = \abs{\pi_\Phi (B\times X)}\\
	& = \abs{\iint_{B\times X}\Phi(x,y) d\pi(x,y)}\\
	&\leq \norm{\Phi}_\infty \pi^1_\Phi(B)\\
	&= \norm{\Phi}_\infty \mu(B).
	\end{align*}
Also, $\abs{\pi^2_\Phi (B)}\leq \norm{\Phi}_\infty \nu(B).$
\end{proof}
The above propositions prove the following statement.
\begin{theorem}\label{thm:RadonNikodymFirstMarginal}
	Let $X$ be a locally compact Hausdorff space. Let $\pi$ be a Borel probability measure on $X\times X$. Let $\Phi\in\mathscr{C}_c(X\times X)$. Then the Radon-Nikodym derivative $\frac{d\pi^1_\Phi}{d\mu}$ solves equation \eqref{eq:KichDivOperator} with $\mu=\pi^1$, the first marginal of $\pi$, and $\pi^1_\Phi$ is the first marginal of $\pi_{\Phi}(A)=\iint_A \Phi d\pi$, $A$ Borel set in $X\times X$.
\end{theorem}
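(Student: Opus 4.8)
The plan is to obtain the statement as a direct synthesis of the two preceding propositions, specialized to the choice $\mu=\pi^1$. The key observation is that every Borel probability measure $\pi$ on $X\times X$ is automatically a coupling of its own marginals: setting $\mu:=\pi^1$ and $\nu:=\pi^2$, the defining identities $\pi^1=\mu$ and $\pi^2=\nu$ hold by construction. This places us squarely inside the hypotheses of the second proposition, with no additional structure to impose on $\pi$.

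First I would invoke the second proposition with this canonical coupling. Because $\pi$ couples $\mu=\pi^1$ and $\nu=\pi^2$, that proposition yields, for every Borel set $B$ in $X$, the estimate $\abs{\pi^1_\Phi(B)}\leq\norm{\Phi}_\infty\,\mu(B)$, and in particular the absolute continuity $\pi^1_\Phi\ll\mu$. This supplies exactly the hypothesis demanded by the first proposition, so the two results interlock without any gap.

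Then I would feed this absolute continuity into the first proposition. With $\pi^1_\Phi\ll\mu$ in hand, the Radon--Nikodym theorem guarantees that the derivative $\frac{d\pi^1_\Phi}{d\mu}$ exists as a $\mu$-integrable function, and the first proposition certifies that $\psi=\frac{d\pi^1_\Phi}{d\mu}$ solves \eqref{eq:KichDivOperator}. Unwinding the definition of $\pi^1_\Phi$ as the first marginal of $\pi_\Phi(A)=\iint_A\Phi\,d\pi$ then closes the argument and matches the claimed form of the solution.

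The one point requiring care---though it is not a genuine obstacle---is keeping the two roles of the marginal aligned: the same measure $\pi^1$ serves simultaneously as the reference measure $\mu$ appearing in \eqref{eq:KichDivOperator} and as the marginal against which $\pi^1_\Phi$ is shown to be absolutely continuous. Once the identification $\mu=\pi^1$ is fixed at the outset, no further work is needed, which is precisely why the preamble can assert that the above propositions already prove the statement.
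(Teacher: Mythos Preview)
Your proposal is correct and matches the paper's approach exactly: the paper simply asserts that ``the above propositions prove the following statement,'' and you have spelled out precisely how---$\pi$ is tautologically a coupling of $\pi^1$ and $\pi^2$, so the second proposition supplies $\pi^1_\Phi\ll\pi^1$, and the first proposition then yields the Radon--Nikodym derivative as a solution of \eqref{eq:KichDivOperator}.
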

We shall use the notation $Kir_\pi \Phi$ for the solution $\frac{d\pi^1_\Phi}{d\mu}$
 of \eqref{eq:KichDivOperator} in this case. Notice that $\Phi$ continuous and bounded suffice for the above results. In particular, if $f:X\to \mathbb{R}$ is continuous and bounded, so is $F(x,y)=f(y)-f(x)$, and we can define a Laplacian type operator based on Kirchhoff divergence by
 \begin{equation*}
 \Delta_\pi f = Kir_\pi F.
 \end{equation*}
 Hence, at least formally, the solution for the heat conduction problem
 \begin{equation*}
 (P)\,\left\{
 \begin{array}{ll}
 \frac{\partial u}{\partial t} = \Delta_\pi u,\, & t>0, x\in X\\
 u(x,0)=g(x) \, & x\in X
 \end{array}
 \right.
 \end{equation*}
 is given by
 \begin{equation*}
 u(x,t) = (e^{t \Delta_\pi}g)(x).
 \end{equation*}
 Or, when a spectral resolution of the operator $\Delta_\pi$, in terms of a sequence of eigenvalues $\lambda_i$ and eigenfunctions $\psi_i$, is available
 \begin{equation*}
 u(x,t) = \sum_i e^{t \lambda_i} \proin{g}{\psi_i} \psi_i (x)
 \end{equation*}
 with $\proin{g}{\psi_i} = \int_X g(x)\psi_i (x) d\mu(x)$.

 Along the next sections we aim to explore the above abstract setting for some particular couplings.

\section{The finite case. Weighted graphs}\label{sec:FiniteCase}

Let $X=\mathcal{V}=\set{1,2,\ldots,n}$ be the set of vertices of a weighted graph $\mathcal{G}=(\mathcal{V},E,w)$, where $E=\set{(i,j): i\in\mathcal{V}, j\in\mathcal{V}}=X\times X$ is the set of all edges of $\mathcal{G}$, and $w:E\to\mathbb{R}^+\cup\{0\}$ is the nonnegative weight of each edge, with $w_{ij}=w_{ji}$, $w_{ii}=0$ for every $i$, $w_{ij}>0$, $i\neq j$ and $\sum_{i=1}^n\sum_{j=1}^n w_{ij}=1$. The weight $w$ determines the probability measure $\pi$ by $\pi(A)=\sum_{(i,j)\in A}w_{ij}$ for a subset $A$ of $E$. Hence the first marginal $\mu$ of $\pi$ is given by the weights $\mu_i=\sum_{j=1}^n w_{ij}$. In other words, $\mu(B)=\sum_{i\in B}\mu_i=\sum_{i\in B}\sum_{j=1}^n w_{ij}=\pi(B\times X)$. So that
\begin{equation*}
\int_X \varphi d\mu = \sum_{i=1}^{n} \mu_i \varphi_i, \textrm{ and }
\end{equation*}
\begin{equation*}
\iint_{X\times X} \Phi d\pi =\sum_{j=1}^n\sum_{i=1}^n w_{ij}\Phi_{ij}.
\end{equation*}
Equation \eqref{eq:KichDivOperator} takes the form
\begin{equation*}
\sum_{i=1}^n \mu_i\varphi_i \psi_i =\int_X \varphi \psi d\mu=
\iint_{X\times X} \varphi(x)\Phi(x,y) d\pi =\sum_{j=1}^n\sum_{i=1}^n w_{ij}\varphi_i\Phi_{ij} = \sum_{i=1}^n\varphi_i \left(\sum_{j=1}^n w_{ij}\Phi_{ij}\right),
\end{equation*}
which should hold for every $\varphi$. Hence
$\mu_i\psi_i = \sum_{j=1}^n w_{ij}\Phi_{ij}$,
or, since each $\mu_i$ is positive
\begin{equation*}
\psi_i =(Kir\, \Phi)_i =\frac{1}{\mu_i}\sum_{j=1}^n w_{ij}\Phi_{ij}=\frac{1}{\sum_{i=1}^n w_{ij}}\sum_{j=1}^n w_{ij}\Phi_{ij}.
\end{equation*}
And the corresponding Laplace's operator of a function $f$ defined on the vertices, is given by
\begin{equation*}
(\Delta_\pi f)_i =\frac{1}{\mu_i}\sum_{j=1}^n w_{ij}(f_j - f_i).
\end{equation*}
The harmonic functions in this setting are those that satisfy the mean value identity
\begin{equation*}
f_i = \frac{1}{\sum_{j=1}^n w_{ij}}\sum_{j=1}^n w_{ij} f_j.
\end{equation*}
In matrix notation, $\Delta_\pi=D^{-1}W-I$, where $D = \textrm{diagonal}(\mu_1,\ldots,\mu_n)$, and $W=(w_{ij})$. The diffusion problem
\begin{equation*}
\left\{
\begin{array}{ll}
\frac{\partial u}{\partial t} = \Delta_\pi u,\, & t>0\\
u(i,0)=f_i \, &i=1,\ldots,n
\end{array}
\right.
\end{equation*}
has the solution $\overline{u}(t)=e^{t\Delta_\pi}\overline{f}$, $\overline{f}=(f_1,\ldots,f_n)$.

We have $e^{t\Delta_\pi}=e^{-t}e^{tD^{-1}W}$.
The general theory of Markov chains can be applied to the analysis of the steady state for the solution $\overline{u}(t)$ of $(P)$. See \cite{Roberts1976discrete}. A $n\times n$ transition matrix $B$ is said to correspond to a regular Markov chain, if some power of $B$ has only positive elements. The Fundamental Limit Theorem for regular Markov chains proves that there exist a Markov matrix $M$ with all rows equal to $m=(m_1,\ldots,m_n)$, with $m_i>0$ for every $i=1, \ldots, n$ and $\sum_{i=1}^n m_i=1$, such that
\begin{equation*}
\lim_{k\to\infty} B^k = M.
\end{equation*}
The next result is a particular instance of convergence to equilibrium (see \cite{NorrisBook97}).
\begin{proposition}\label{propo:limitMarkovmatrix}
For $n\geq 3$, let $W=(w_{ij})$ be a nonnegative $n\times n$ matrix such that $w_{ii}=0$ for every $i=1,\ldots,n$, $w_{ij}=w_{ji}>0$ for each $i\neq j$ and $\sum_{i=1}^n\sum_{j=1}^n w_{ij}=1$. Let $\pi$ be the probability measure defined on $\{1,\ldots,n\}^2$ by $\pi(A)=\sum_{(i,j)\in A} w_{ij}$. Given a function $\overline{f}=(f_1,\ldots,f_n)$ defined on the vertices $\mathcal{V}$ of the weighted graph $\mathcal{G}$, set $\overline{u}(t)= e^{t\Delta_\pi}\overline{f}$ to denote the solution of $(P)$ with $t>0$. Then
\begin{equation*}
\lim_{t\to\infty} \overline{u}(t) = \left(\sum_{j=1}^n m_j f_j\right)\overline{1},
\end{equation*}
where $\overline{1}=(1,\ldots,1)$ and $\overline{m}=(m_1,\ldots,m_n)$ is the constant row of the Markov limit matrix
\begin{equation*}
(D^{-1}W)^{\infty} = \lim_{k\to\infty} (D^{-1}W)^k = \begin{pmatrix}
\overline{m}\\
\vdots\\
\overline{m}
\end{pmatrix}
\end{equation*}
\end{proposition}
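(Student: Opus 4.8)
The plan is to reduce the claim to the classical convergence-to-equilibrium theorem for regular Markov chains, via the identity already recorded in the excerpt, $e^{t\Delta_\pi}=e^{-t}e^{tD^{-1}W}$. Write $B=D^{-1}W$. First I would check that $B$ is stochastic: the $i$-th row of $B$ sums to $\frac{1}{\mu_i}\sum_{j=1}^n w_{ij}=\frac{\mu_i}{\mu_i}=1$, so $B$ is the transition matrix of a Markov chain. The hypotheses $w_{ij}>0$ for $i\neq j$ and $w_{ii}=0$ make every off-diagonal entry of $B$ positive and every diagonal entry zero; hence for $n\geq 3$ one has $(B^2)_{ij}=\sum_k B_{ik}B_{kj}>0$ for all $i,j$, since one can always choose an intermediate index $k\notin\set{i,j}$. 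Thus $B$ is regular, and the Fundamental Limit Theorem quoted above yields $B^k\to M$ as $k\to\infty$, where $M=(D^{-1}W)^{\infty}$ has every row equal to the vector $\overline{m}$ appearing in the statement.

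Next I would expand the exponential as a Poisson average of the powers of $B$:
\begin{equation*}
e^{t\Delta_\pi}=e^{-t}e^{tB}=\sum_{k=0}^\infty \frac{e^{-t}t^k}{k!}\,B^k.
\end{equation*}
Since $\sum_{k=0}^\infty \frac{e^{-t}t^k}{k!}=1$, this is a convex combination of the matrices $B^k$ with $\mathrm{Poisson}(t)$ weights. The heart of the argument is then to show $e^{t\Delta_\pi}\to M$ as $t\to\infty$. Setting $E_k=B^k-M$, so that $\norm{E_k}\to 0$, one obtains
\begin{equation*}
e^{t\Delta_\pi}-M=\sum_{k=0}^\infty \frac{e^{-t}t^k}{k!}\,E_k,
\end{equation*}
which I would bound in a fixed matrix norm by splitting the sum at a cutoff $K$. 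Given $\varepsilon>0$, choose $K$ with $\norm{E_k}<\varepsilon$ for all $k\geq K$; then the tail satisfies $\sum_{k\geq K}\frac{e^{-t}t^k}{k!}\norm{E_k}\leq\varepsilon$, while the head is dominated by $\bigl(\max_{k<K}\norm{E_k}\bigr)\sum_{k<K}\frac{e^{-t}t^k}{k!}$, and the finite Poisson mass $\sum_{k<K}\frac{e^{-t}t^k}{k!}$ tends to $0$ as $t\to\infty$ because the mean $t$ diverges. Letting $t\to\infty$ and then $\varepsilon\to 0$ gives $e^{t\Delta_\pi}\to M$.

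Finally I would pass to the solution itself: $\overline{u}(t)=e^{t\Delta_\pi}\overline{f}\to M\overline{f}$, and since every row of $M$ equals $\overline{m}$, the $i$-th component of $M\overline{f}$ equals $\sum_{j=1}^n m_j f_j$ independently of $i$, that is $M\overline{f}=\bigl(\sum_{j=1}^n m_j f_j\bigr)\overline{1}$, as claimed. I expect the main obstacle to be the middle step, namely the interchange of the limit $t\to\infty$ with the infinite series; the clean way to handle it is the cutoff estimate above, which converts the probabilistic intuition that a $\mathrm{Poisson}(t)$ variable concentrates on large indices (where $B^k$ is already close to $M$) into a rigorous $\varepsilon$-bound. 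By comparison, verifying regularity of $B$ and identifying the product $M\overline{f}$ are routine.
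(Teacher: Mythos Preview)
Your proposal is correct and follows essentially the same route as the paper: identify $B=D^{-1}W$ as a regular stochastic matrix, invoke the Fundamental Limit Theorem to get $B^k\to M$, write $e^{t\Delta_\pi}$ as the Poisson-weighted average $\sum_{k\ge 0}\frac{e^{-t}t^k}{k!}B^k$, and pass to the limit by the same head/tail cutoff estimate. The only cosmetic differences are that you carry out the cutoff at the matrix level (with $E_k=B^k-M$) whereas the paper works directly with the vectors $\widetilde{W}^l\overline{f}-M\overline{f}$, and that you give an explicit reason ($(B^2)_{ij}>0$ via an intermediate index $k\notin\{i,j\}$, available since $n\ge 3$) for regularity, which the paper only asserts.
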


\begin{proof}
Notice first that $\widetilde{W}=D^{-1}W$ is the matrix of a Markov chain positive entries except for its diagonal terms. Hence $\widetilde{W}$ is a regular Markov chain. From the Fundamental Limit Theorem for regular Markov chains, we have that
\begin{equation*}
\lim_{k\to\infty} \widetilde{W}^k = M = \begin{pmatrix}
\overline{m}\\
\vdots\\
\overline{m}
\end{pmatrix}
\end{equation*}
with $\overline{m}=(m_1,\ldots,m_n)$, $m_i>0$ for every $i=1,\ldots,n$ and $\sum_{i=1}^n m_i =1$. On the other hand,
\begin{equation*}
\overline{u}(t) = e^{-t}\sum_{l\geq 0}\frac{t^l}{l!}\widetilde{W}^l\overline{f},
\end{equation*}
for $t>0$. With the standard notation for norms in $\mathbb{R}^n$, we have that for every $\varepsilon>0$ there exists an integer $L$ such that for $l>L$ we have $\norm{\widetilde{W}^l\overline{f}-M\overline{f}}<\frac{\varepsilon}{2}$. Hence
\begin{align*}
\norm{\overline{u}(t)-\left(\sum_{j=1}^n m_j f_j\right)\overline{1}}
& = \norm{\overline{u}(t)-M\overline{f}}\\
&= \norm{e^{-t}\sum_{l\geq 0}\frac{t^l}{l!}(\widetilde{W}^l\overline{f}-M\overline{f})}\\
& \leq e^{-t}\sum_{l=0}^L\frac{t^l}{l!}\norm{\widetilde{W}^l\overline{f}-M\overline{f}} +
\frac{\varepsilon}{2}e^{-t}\sum_{l\geq L+1}\frac{t^l}{l!}\\
&<\varepsilon
\end{align*}
for $t$ large enough.
\end{proof}

\section{Markov coupling}
Let $X$ be a locally compact Hausdorff space. Let $\mu$ and $\nu$ be two probabilities on the Borel subsets of $X$. Let $\pi$ be a probability on the Borel sets of $X\times X$ that is absolutely continuous with respect to $\mu\times \nu$.
\begin{lemma}\label{lem:couplingMarkov}
Let $X$, $\mu$, $\nu$ and $\pi$ be as described. Then $\pi$ is a coupling for $\mu$ and $\nu$ if and only if $K=\frac{d\pi}{d(\mu\times\nu)}$ is a Markov kernel in the sense that
\begin{enumerate}[(i)]
\item $\int_X K(x,y) d\mu(x)=1$ for $\nu$ almost every $y\in X$, and
\item $\int_X K(x,y) d\nu(y)=1$ for $\mu$ almost every $x\in X$.
\end{enumerate}
\end{lemma}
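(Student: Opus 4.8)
The plan is to translate the two marginal conditions defining a coupling, namely $\pi^1=\mu$ and $\pi^2=\nu$, into pointwise almost-everywhere statements about the density $K$ by means of Tonelli's theorem, and then to read off the stated equivalence from the essential uniqueness of densities with respect to a finite measure.

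First I would record that, since $\pi\ll\mu\times\nu$ with $K=\frac{d\pi}{d(\mu\times\nu)}$, we have $\pi(A)=\iint_A K\,d(\mu\times\nu)$ for every Borel set $A\subseteq X\times X$; as $\pi$ is a nonnegative measure we may take $K\geq 0$. Specializing to the slabs $A=B\times X$ and $A=X\times B$ and applying Tonelli's theorem, which is legitimate because $K\geq0$ and $\mu\times\nu$ is $\sigma$-finite, gives
\begin{align*}
\pi^1(B)&=\pi(B\times X)=\int_B\Big(\int_X K(x,y)\,d\nu(y)\Big)\,d\mu(x),\\
\pi^2(B)&=\pi(X\times B)=\int_B\Big(\int_X K(x,y)\,d\mu(x)\Big)\,d\nu(y),
\end{align*}
for every Borel set $B\subseteq X$. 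Writing $g(x)=\int_X K(x,y)\,d\nu(y)$ and $h(y)=\int_X K(x,y)\,d\mu(x)$, the same theorem guarantees that $g$ and $h$ are measurable, and they are integrable since $\int_X g\,d\mu=\int_X h\,d\nu=\pi(X\times X)=1$.

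Finally I would note that $\pi$ being a coupling means precisely $\pi^1(B)=\mu(B)$ and $\pi^2(B)=\nu(B)$ for all Borel $B$, that is,
\[
\int_B g\,d\mu=\int_B 1\,d\mu\quad\text{and}\quad\int_B h\,d\nu=\int_B 1\,d\nu\qquad\text{for all }B.
\]
Because $\mu$ and $\nu$ are finite and $g,h$ are integrable, the standard uniqueness lemma — two integrable functions whose integrals agree over every measurable set coincide almost everywhere — shows that $\pi^1=\mu$ is equivalent to $g=1$ $\mu$-a.e. (condition (ii)) and that $\pi^2=\nu$ is equivalent to $h=1$ $\nu$-a.e. (condition (i)). Conjoining the two equivalences yields the lemma. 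The argument is essentially bookkeeping around Tonelli's theorem; the only points demanding care — hence the mild main obstacle — are the measurability and integrability of the partial integrals $g$ and $h$ and the justification of interchanging the order of integration, all of which are secured by the nonnegativity of $K$ and the finiteness of $\mu\times\nu$.
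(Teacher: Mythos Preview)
Your proof is correct and follows essentially the same route as the paper: express the marginals via Fubini/Tonelli as $\pi^1(B)=\int_B\big(\int_X K\,d\nu\big)\,d\mu$ and $\pi^2(B)=\int_B\big(\int_X K\,d\mu\big)\,d\nu$, and then conclude by uniqueness of densities. You are simply more explicit than the paper about the measurability/integrability of the inner integrals and the justification for interchanging the order of integration.
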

\begin{proof}
Assume first that $\pi$ is a coupling for $\mu$ and $\nu$. Then for every $B$, Borel set in $X$, we have
\begin{equation*}
\mu(B)=\pi(B\times X)=\iint_{B\times X} \frac{d\pi}{d(\mu\times \nu)} d\mu d\nu
=\int_B\left(\int_X K(x,y) d\nu(y)\right) d\mu(x).
\end{equation*}
Hence $\int_X K(x,y) d\nu(y) =1$ for $\mu$ almost every $x\in X$. Identity $(i)$ follows the same argument. Assume now that $(i)$ and $(ii)$ hold, then by Fubini's theorem
\begin{equation*}
\pi(B\times X)=\iint_{B\times X} d\pi=\iint_{B\times X} K(x,y) d\mu(x) d\nu(y)=\int_B\left(K(x,y) d\nu(y)\right) d\mu(x)=\mu(B).
\end{equation*}
\end{proof}

The next statement provides the Kirchhoff divergence operator associated to these type of Markov couplings.
\begin{theorem}
	Let $X$ be a locally compact Hausdorff topological space. Let $\mu$ and $\nu$ be two given Borel probability measures on $X$. Let $\pi$ be the coupling for $\mu$ and $\nu$ given by
	\begin{equation*}
	\pi(A) = \iint_A K(x,y) d\mu(x) d\nu(y)
	\end{equation*}
	with $K$ satisfying $(i)$ and $(ii)$ in Lemma~\ref{lem:couplingMarkov} and $A$ any Borel set in $X\times X$. Then, for $\Phi\in\mathscr{C}_c(X\times X)$ we have
	\begin{equation*}
	Kir_\pi \Phi(x) = \int_{y\in X} K(x,y) \Phi(x,y) d\nu(y).
	\end{equation*}
\end{theorem}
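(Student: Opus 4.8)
The plan is to compute the first marginal $\pi^1_\Phi$ of $\pi_\Phi$ explicitly by exploiting the product structure $d\pi = K\, d\mu\, d\nu$, and then to read off its Radon--Nikodym derivative with respect to $\mu$, which by Theorem~\ref{thm:RadonNikodymFirstMarginal} is exactly $Kir_\pi\Phi$.

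First I would fix an arbitrary Borel set $B\subseteq X$ and unwind the definitions. By the definition of the first marginal and of $\pi_\Phi$,
\[
\pi^1_\Phi(B) = \pi_\Phi(B\times X) = \iint_{B\times X} \Phi(x,y)\, d\pi(x,y).
\]
Substituting the hypothesis $d\pi(x,y) = K(x,y)\, d\mu(x)\, d\nu(y)$ gives
\[
\pi^1_\Phi(B) = \iint_{B\times X} \Phi(x,y) K(x,y)\, d\mu(x)\, d\nu(y).
\]

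The key step is to interchange the order of integration so as to isolate an outer integral of the form $\int_B(\cdots)\, d\mu(x)$. To apply Fubini's theorem I would first verify integrability of $\Phi K$ with respect to $\mu\times\nu$: since $\Phi\in\mathscr{C}_c(X\times X)$ is bounded and $K\geq 0$ with $\iint_{X\times X} K\, d\mu\, d\nu = \pi(X\times X)=1$, one has $\iint \abs{\Phi K}\, d\mu\, d\nu \leq \norm{\Phi}_\infty < \infty$. Fubini then yields
\[
\pi^1_\Phi(B) = \int_B \left( \int_X \Phi(x,y) K(x,y)\, d\nu(y) \right) d\mu(x),
\]
and simultaneously guarantees that the inner integral $x\mapsto \int_X \Phi(x,y) K(x,y)\, d\nu(y)$ is a $\mu$-measurable function of $x$.

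Since $B$ was arbitrary, this identity exhibits $\pi^1_\Phi$ as absolutely continuous with respect to $\mu$ with density $x\mapsto \int_X \Phi(x,y) K(x,y)\, d\nu(y)$. By the essential uniqueness of the Radon--Nikodym derivative, this density coincides $\mu$-a.e.\ with $\frac{d\pi^1_\Phi}{d\mu}$, which is $Kir_\pi\Phi$ by Theorem~\ref{thm:RadonNikodymFirstMarginal}, completing the argument. The only point genuinely requiring care is the Fubini justification; everything else is a direct substitution. It is worth noting, finally, that the coupling hypothesis — equivalently the Markov property of $K$ from Lemma~\ref{lem:couplingMarkov} — is precisely what ensures $\mu=\pi^1$, so that the Radon--Nikodym derivative is taken against the correct marginal and the formula indeed delivers $Kir_\pi\Phi$.
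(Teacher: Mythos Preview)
Your proof is correct and follows essentially the same route as the paper: compute $\pi^1_\Phi(B)=\iint_{B\times X}\Phi K\,d\mu\,d\nu$, apply Fubini to write this as $\int_B\bigl(\int_X \Phi K\,d\nu\bigr)\,d\mu$, and identify the inner integral as $\tfrac{d\pi^1_\Phi}{d\mu}=Kir_\pi\Phi$. Your version simply adds the explicit Fubini justification and the remark on the role of the coupling hypothesis, which the paper leaves implicit.
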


\begin{proof}
	The measure $\pi_{\Phi}$ induced by $\Phi\in\mathscr{C}_c(X\times X)$ is now given by $\pi_{\Phi}(A)=\iint_A \Phi K d\mu d\nu$. Its first marginal is\begin{equation*}
	\pi_{\Phi}^1(B) = \pi_{\Phi}(B\times X)=\int_B\left(\int_{y\in X}\Phi(x,y) K(x,y) d\nu(y)\right) d\mu(x).
	\end{equation*}
	Hence
	\begin{equation*}
	Kir_\pi \Phi(x)=\frac{d\pi_{\Phi}^1}{d\mu} = \int_{y\in X}\Phi(x,y)K(x,y) d\nu(y),
	\end{equation*}
	as desired.
\end{proof}

For a function $f\in\mathscr{C}_c(X)$, the function $\Phi(x,y)=f(y)-f(x)$ is continuous and bounded and the operator $Kir_\pi$ is well defined on $\Phi$. The Laplacian of $f$ in this setting is, then
\begin{equation*}
\Delta_\pi f(x) = \int_{y\in X} (f(y)-f(x)) K(x,y) d\nu(y) = \int_{y\in X} K(x,y) f(y) d\nu(y) - f(x).
\end{equation*}
Or, in terms of operators, $\Delta_\pi=\mathcal{K}-I$, with $\mathcal{K}f(x)=\int_{y\in X} K(x,y)f(y) d\nu(y)$.
In this setting, the corresponding diffusion (P), is given by
$u(x,t) =e^{t\Delta_\pi} f(x)$,
with $e^{t\Delta_\pi}=\sum_{k\geq 0}\frac{t^k}{k!}(\mathcal{K}-I)^k= e^{-t}\sum_{m\geq 0}\frac{t^m}{m!}\mathcal{K}^m$.

As before, the existence and structure of steady states for $t\to\infty$ depends on the particular dynamics of the sequence $\{\mathcal{K}^j: j\geq 0\}$ of iterations of the operator $\mathcal{K}$ induced by the kernel $K$ and the measure $\nu$ on $X$. Next we explore the case of dyadic Markov kernels, where the spectral analysis can be explicitly carried through Haar type systems built on dyadic type families on abstract settings.

Let $X=[0,1)$ with the usual distance. Let $\mathcal{D}=\cup_{j\geq 0}\mathcal{D}^j$, $\mathcal{D}^j=\{I^j_k=[k2^{-j},(k+1)2^{-j}): k=0,1,\ldots,2^j-1\}$ be the family of standard dyadic intervals in $[0,1)$. For $j\geq 1$, $I=I^j_k\in\mathcal{D}^j$ the Haar function $h_I$ is given by $h_I(x)=2^{j/2}h^0_0(2^j x-k)$, with $h^0_0(x)=\mathcal{X}_{[0,\tfrac{1}{2})}(x)-\mathcal{X}_{[\tfrac{1}{2},1)}(x)$. The system $\mathscr{H}\cup \{\mathcal{X}_{[0,1)}\}$, with $\mathscr{H}=\{h_I: I\in\mathcal{D}\}$, provides an orthonormal basis for $L^2([0,1), dx)$. The dyadic family $\mathcal{D}$ provides also a natural metric structure on $[0,1)$. For $x$ and $y$ in $[0,1)$, set $\delta(x,y)=\inf\{\abs{I}: I\in\mathcal{D} \textrm{ with } x,y\in I\}$. Then $\delta$ is an ultrametric on $[0,1)$ whose balls are the dyadic intervals. In fact, $B_{\delta}(x,r)=\{y\in [0,1): \delta(x,y)<r\}=[0,1)$ for $r\geq 1$ and $B_{\delta}(x,r)=I$, where $x\in I\in\mathcal{D}^j$ and $2^{-j}<r\leq 2^{-j+1}$.

Let $d\mu=d\nu=dx$ on the Borel subsets of $[0,1)$ and let us consider a special type of non trivial couplings. We say that an absolutely continuous coupling $\pi$ of $dx$ with itself is a dyadic coupling if $K(x,y)=\tfrac{d\pi}{dx dy}=\varphi(\delta(x,y))$ for some nonnegative real function defined on $[0,1]$. Notice that $K(x,y)=\varphi(\delta(x,y))$ is a symmetric kernel since $\delta$ is symmetric. On the other hand since $\pi$ is a coupling probability measure, Lemma~\ref{lem:couplingMarkov} implies that $\int_{[0,1)}\varphi(\delta(x,y)) dy=1$ for every $x\in [0,1)$. Let us mention at this point that this type of kernels have been considered in \cite{AiGoMo18CLT} regarding some extension of the Central Limit Theorem. Set $\mathcal{M}_\delta (dx)$ to denote the class of the all kernels $K$ of the form $\varphi\circ \delta$ with $\int_{[0,1)}K(x,y)dy=1$.

The next statement contains some basic properties of these Markov kernel. Its proof can be found in \cite{AiGoMo18CLT}.
\begin{proposition}
	\quad
\begin{enumerate}[(1)]
	\item $K\in\mathcal{M}_{\delta}(dx)$ if and only if $K(x,y)=\sum_{j\geq 0}\alpha_j 2^j \mathcal{X}_{(0,2^{-j}]}(\delta(x,y))$ with
	\begin{enumerate}[(a)]
		\item $\sum_{j\geq 0} \abs{\alpha_j}<\infty$,
		\item $\sum_{l\leq j}\alpha_l 2^l\geq 0$, for every $j\geq 0$,
		\item $\sum_{j\geq 0} \alpha_j = 1$.
	\end{enumerate}
\item For $K\in\mathcal{M}_\delta (dx)$ the operator $\mathcal{K}f(x)=\int_{[0,1)}K(x,y)f(y)dy=\int_{[0,1)}\varphi(\delta(x,y)) dy$ has the special resolution $\mathcal{K}(\mathcal{X}_{[0,1)})=\mathcal{X}_{[0,1)]}$ and $\mathcal{K}h=\lambda_h h$, for $h\in\mathscr{H}$, with $\lambda_h=\sum_{j\geq j(h)}\alpha_j$, where $j(h)$ is the scale parameter of the support of $h$.
\item Since each $\lambda_h$ depends only on the scale of $j$ of $h$ we denote the sequence by $\{\lambda_j: j\geq 0\}$. With this notation $\lambda_0=1$ and $\lambda_j\to 0$ as $j\to\infty$.
\end{enumerate}
\end{proposition}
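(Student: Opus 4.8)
The plan for part (1) is to exploit that the ultrametric $\delta$ takes only the values $\{2^{-m}:m\ge 0\}$ off the diagonal, so that a kernel $K=\varphi\circ\delta$ is completely determined by the scalar sequence $\beta_m=\varphi(2^{-m})$. The first thing I would record is the geometric reading of the proposed building blocks: since $\delta(x,y)\le 2^{-j}$ holds exactly when $x$ and $y$ share a common interval of $\mathcal{D}^j$, the factor $\mathcal{X}_{(0,2^{-j}]}(\delta(x,y))$ is, off the diagonal, the indicator of the event ``$x$ and $y$ lie in the same scale-$j$ dyadic interval''. Evaluating the proposed series on the level set $\{\delta=2^{-m}\}$ then annihilates every term with $j>m$ and leaves the telescoping identity $\beta_m=\sum_{j\le m}\alpha_j 2^j$, equivalently $\alpha_0=\beta_0$ and $\alpha_j=2^{-j}(\beta_j-\beta_{j-1})$ for $j\ge 1$. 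This is a bijection between candidate densities $\varphi$ and coefficient sequences $(\alpha_j)$, and under it the listed conditions read off directly: condition (b), $\sum_{l\le j}\alpha_l 2^l\ge 0$, is precisely $\beta_j=\varphi(2^{-j})\ge 0$, i.e. nonnegativity of the density; and since $\int_{[0,1)}\mathcal{X}_{(0,2^{-j}]}(\delta(x,y))\,dy$ equals the length $2^{-j}$ of the scale-$j$ interval $I_j(x)$ containing $x$, termwise integration of the series gives $\int_{[0,1)}K(x,y)\,dy=\sum_{j\ge 0}\alpha_j$, so the Markov normalization is exactly condition (c).

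Condition (a) I would obtain as a consequence of integrability rather than posit separately: from $\beta_m\ge 0$ and $\int_{[0,1)}K(x,y)\,dy=\sum_m\beta_m 2^{-(m+1)}=1<\infty$ one gets $\sum_m\beta_m 2^{-m}<\infty$, whence $\sum_j\abs{\alpha_j}\le\sum_j 2^{-j}(\beta_j+\beta_{j-1})<\infty$ (with $\beta_{-1}=0$). For the forward implication I would take $K\in\mathcal{M}_\delta(dx)$, define $(\alpha_j)$ by the formulas above, and verify (a), (b), (c); for the converse I would start from $(\alpha_j)$ satisfying (a), (b), (c), define $K$ by the series, observe that (a) makes it an absolutely convergent $L^1$ kernel so that the termwise integration above is legitimate by Fubini--Tonelli, that (b) makes $K=\varphi\circ\delta$ nonnegative with $\varphi(2^{-m})=\beta_m$, and that (c) supplies the Markov condition, so that $K\in\mathcal{M}_\delta(dx)$.

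For part (2), the identity $\mathcal{K}\mathcal{X}_{[0,1)}=\mathcal{X}_{[0,1)}$ is immediate from $\int_{[0,1)}K(x,y)\,dy=1$. For a Haar function $h=h_I$ I would reduce $\mathcal{K}h(x)=\sum_{j\ge 0}\alpha_j 2^j\int_{I_j(x)}h(y)\,dy$, the interchange being justified by (a) because $\abs{\int_{I_j(x)}h}\le\norm{h}_{\infty}2^{-j}$. The whole computation is then governed by the cancellation of $h$ against dyadic intervals: if $I_j(x)$ contains $\supp h$ or is disjoint from it the integral vanishes (either $\int h=0$ or there is no overlap), and the only surviving scales $j$ are those for which $I_j(x)$ sits strictly inside $\supp h$, where $h$ is constant equal to $h(x)$ so that $\int_{I_j(x)}h=h(x)2^{-j}$. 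Summing the surviving terms collapses the factors $2^{j}2^{-j}=1$ and leaves $\mathcal{K}h(x)=h(x)\sum_{j\ge j(h)}\alpha_j$, where $j(h)$ is the common scale of the two halves of $\supp h$; in particular $\lambda_h$ depends on $h$ only through $j(h)$, which is the first assertion of part (3).

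For part (3) it remains to locate the extreme values of the sequence $\lambda_j=\sum_{m\ge j}\alpha_m$: the value $\lambda_0=1$ is condition (c) (equivalently the eigenvalue $1$ on the constant eigenfunction $\mathcal{X}_{[0,1)}$), and $\lambda_j\to 0$ as $j\to\infty$ because $\lambda_j$ is the tail of the series $\sum_m\alpha_m$, which converges absolutely by (a). I expect the main obstacle to be neither the algebra of part (1) nor the limits of part (3), but the interval bookkeeping in part (2): one must track carefully the scale at which a scale-$j$ interval first ``sees'' the oscillation of $h$ rather than its (vanishing) mean, since this is exactly what fixes the lower summation index and hence the correct labelling of the eigenvalues by $j(h)$.
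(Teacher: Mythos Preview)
The paper does not supply its own proof of this proposition; it defers entirely to the reference \cite{AiGoMo18CLT}. Your argument is correct and self-contained, and it is the natural one: reduce $\varphi\circ\delta$ to the sequence $\beta_m=\varphi(2^{-m})$, recover the $\alpha_j$ by telescoping, and for part~(2) use that $\int_{I_j(x)}h$ vanishes unless $I_j(x)$ lies strictly inside $\supp h$, in which case it equals $h(x)\,2^{-j}$.

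One point worth flagging is the indexing of $j(h)$. You take it to be the scale of the two halves of $\supp h$ (one level finer than $\supp h$ itself), and this is the reading that makes the formula $\lambda_h=\sum_{j\ge j(h)}\alpha_j$ correct and compatible with $\lambda_0=1$ as the eigenvalue on $\mathcal{X}_{[0,1)}$. If one reads the paper's phrase ``the scale parameter of the support of $h$'' literally as the level of $\supp h$ in $\mathcal{D}$, the sum acquires a spurious term $\alpha_{j(h)}$; for instance, with $\alpha_0=1$ and $\alpha_j=0$ for $j\ge 1$ one has $K\equiv 1$ and $\mathcal{K}h^0_0=\int_0^1 h^0_0=0$, not $h^0_0$. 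Your interpretation resolves this; it is the paper's wording that is slightly loose, and the mathematics of your proof is sound.
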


From the above proposition we readily obtain the spectral analysis for the Laplacian $\Delta_\pi$. As a consequence we obtain the Haar-Fourier approach to the solution of diffusion in this setting.

\begin{proposition}\label{thm:heatsolutioncoupling}
	Let $K\in\mathcal{M}_\delta(dx)$ as before. Then
	\begin{enumerate}
		\item $\Delta_\pi\mathcal{X}_{[0,1)}=0$ and $\Delta_\pi h= (\lambda_h -1)h$, $h\in\mathscr{H}$;
		\item for $f\in L^2([0,1), dx)$, the function $u(x,t)=\int_{[0,1)} f + \sum_{h\in\mathscr{H}}e^{t(\lambda_h -1)}\proin{f}{h} h(x)$ solves the problem
\begin{equation*}
(P)\,\left\{
\begin{array}{ll}
\frac{\partial u}{\partial t} = \Delta_\pi u,\, & t>0, x\in [0,1)\\
u(x,0)=f(x) \, & x\in [0,1).
\end{array}
\right.
\end{equation*}
\end{enumerate}	
\end{proposition}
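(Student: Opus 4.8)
The plan is to reduce the whole statement to the spectral data for $\mathcal{K}$ recorded in the previous proposition, together with the identity $\Delta_\pi = \mathcal{K} - I$. Part (1) is then immediate by substitution: since $\mathcal{K}\mathcal{X}_{[0,1)} = \mathcal{X}_{[0,1)}$, we get $\Delta_\pi \mathcal{X}_{[0,1)} = \mathcal{K}\mathcal{X}_{[0,1)} - \mathcal{X}_{[0,1)} = 0$, and since $\mathcal{K}h = \lambda_h h$ for each $h\in\mathscr{H}$, we get $\Delta_\pi h = \mathcal{K}h - h = (\lambda_h - 1)h$. This exhibits $\{\mathcal{X}_{[0,1)}\}\cup\mathscr{H}$ as a complete orthonormal system of eigenfunctions of $\Delta_\pi$ with eigenvalues $0$ and $\lambda_h - 1$, which is the structural fact driving part (2).

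For part (2) the idea is that the proposed $u$ is nothing but the spectral representation of $e^{t\Delta_\pi}f$, so I would check it solves $(P)$ by verifying the initial datum and the equation separately. At $t=0$ every exponential equals $1$, so $u(x,0) = \int_{[0,1)} f + \sum_{h\in\mathscr{H}} \proin{f}{h}\, h(x)$, which is exactly the Haar--Fourier expansion of $f$, the constant coefficient being $\proin{f}{\mathcal{X}_{[0,1)}} = \int_{[0,1)} f$; convergence here is the $L^2$ convergence guaranteed by orthonormality and completeness of the Haar system. For the equation, I would differentiate in $t$ term by term and compare against $\Delta_\pi u$ computed by linearity using part (1):
\[
\frac{\partial u}{\partial t}(x,t) = \sum_{h\in\mathscr{H}} (\lambda_h - 1)\, e^{t(\lambda_h-1)}\proin{f}{h}\, h(x) = \Delta_\pi u(x,t),
\]
the two series agreeing term by term because $\Delta_\pi h = (\lambda_h-1)h$.

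The main obstacle is the rigorous justification of the term-by-term differentiation and of pulling $\Delta_\pi$ inside the infinite sum. The enabling fact is that $\mathcal{K}$ is a self-adjoint Markov operator on the probability space $([0,1),dx)$ — its kernel is nonnegative, symmetric, and satisfies $\int K(x,y)\,dy = 1$ — so it is a contraction on $L^2$ and every eigenvalue obeys $\abs{\lambda_h}\leq 1$; hence $\lambda_h - 1 \in [-2,0]$, which makes both $e^{t(\lambda_h-1)}$ and $(\lambda_h-1)e^{t(\lambda_h-1)}$ uniformly bounded (by $1$ and $2$) for all $t\geq 0$. Combined with $\sum_{h}\abs{\proin{f}{h}}^2<\infty$ from Bessel's inequality, this yields $L^2$ convergence of the series for $u$ and for $\partial_t u$, locally uniformly in $t$, and since $\Delta_\pi = \mathcal{K}-I$ is a bounded operator it commutes with the $L^2$-convergent sum. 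I expect to phrase this final step most cleanly by noting that $t\mapsto e^{t\Delta_\pi}f$ is the norm-continuous semigroup generated by the bounded operator $\Delta_\pi$, so that $\frac{d}{dt}e^{t\Delta_\pi}f = \Delta_\pi e^{t\Delta_\pi}f$ holds automatically in $L^2$, and the spectral sum above is merely its explicit Haar expansion.
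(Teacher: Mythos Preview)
Your proposal is correct and aligns with the paper's approach: the paper gives no explicit proof, merely remarking that the spectral analysis for $\Delta_\pi$ is ``readily obtain[ed]'' from the preceding proposition and that the Haar--Fourier solution of the diffusion follows ``as a consequence.'' Your argument fills in precisely the details the paper leaves implicit --- deducing part~(1) from $\Delta_\pi=\mathcal{K}-I$ and the eigenstructure of $\mathcal{K}$, and for part~(2) checking the initial condition via the Haar expansion of $f$ and the equation via term-by-term differentiation, justified by the eigenvalue bounds $\lambda_h-1\in[-2,0]$ and the boundedness of $\Delta_\pi$.
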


The formula provided by $(2)$ in Proposition~\ref{thm:heatsolutioncoupling}  shows that the steady state of this diffusion is the mean value of the initial condition, in agreement with the discrete case in Proposition~\ref{propo:limitMarkovmatrix}, when $m_j=\tfrac{1}{n}$ for every $j$.

\section{Deterministic coupling}
Let $X$ be a locally compact Hausdorff topological space. Let $T$ be a Borel measurable mapping on $X$. Let $\mu$ be a Borel probability on $X$. Set $G:X\to X\times X$ be given by $G(x)=(x,T(x))$. For $A$ a Borel set in $X\times X$, define $\pi(A)=\mu(G^{-1}(A))$. Hence, the first marginal $\pi^1$ of $\pi$ is $\mu$ and the second is $\nu(B)=\mu(T^{-1}(B))$. See \cite{Villanibook} for details.
\begin{theorem}
Let $X$, $T$ and $\pi$ as above. Then for $\Phi\in\mathscr{C}_c(X\times X)$ we have
\begin{equation*}
Kir_\pi \Phi(x) = \Phi(x,T(x)).
\end{equation*}
\end{theorem}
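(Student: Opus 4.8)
The plan is to compute the Kirchhoff divergence directly from its defining equation \eqref{eq:KichDivOperator}, exploiting the fact that $\pi$ is the pushforward of $\mu$ under the graph map $G(x)=(x,T(x))$. By Theorem~\ref{thm:RadonNikodymFirstMarginal} it suffices to exhibit a function $\psi$ for which $\int_X \varphi\,\psi\,d\mu = \iint_{X\times X}\varphi(x)\Phi(x,y)\,d\pi(x,y)$ holds for every $\varphi\in\mathscr{C}_c(X)$; since here $\pi^1=\mu$, such a $\psi$ is then automatically the Radon--Nikodym derivative $\frac{d\pi^1_\Phi}{d\mu}=Kir_\pi\Phi$, unique up to $\mu$-null sets.

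First I would invoke the abstract change-of-variables (pushforward) formula: because $\pi(A)=\mu(G^{-1}(A))$, for every bounded Borel function $H$ on $X\times X$ one has $\iint_{X\times X} H\,d\pi = \int_X (H\circ G)\,d\mu = \int_X H(x,T(x))\,d\mu(x)$. Applying this with $H(x,y)=\varphi(x)\Phi(x,y)$ turns the right-hand side of \eqref{eq:KichDivOperator} into $\int_X \varphi(x)\,\Phi(x,T(x))\,d\mu(x)$.

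Next I would compare this with the left-hand side $\int_X \varphi(x)\psi(x)\,d\mu(x)$ and read off the candidate $\psi(x)=\Phi(x,T(x))$. Before concluding I would check admissibility: since $\Phi$ is continuous and bounded and $T$ is Borel measurable, the map $x\mapsto \Phi(x,T(x))=\Phi(G(x))$ is a bounded Borel function, hence $\mu$-integrable. The identity then holds for all $\varphi\in\mathscr{C}_c(X)$, and the uniqueness part of Theorem~\ref{thm:RadonNikodymFirstMarginal} identifies $\Phi(\cdot,T(\cdot))$ with $Kir_\pi\Phi$, as claimed.

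I do not anticipate a serious obstacle. The only point deserving care is justifying the pushforward formula for a merely Borel map $T$ on a locally compact Hausdorff space, but this is just the standard statement that integration against a pushforward measure equals integration of the composed integrand, valid for any bounded Borel function. Everything else is a direct substitution together with the already-established uniqueness of the Kirchhoff divergence.
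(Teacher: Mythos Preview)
Your argument is correct and follows essentially the same route as the paper: both rely on the pushforward identity $\iint H\,d\pi=\int H(x,T(x))\,d\mu(x)$, the paper applying it with $H=\mathcal{X}_A\Phi$ to compute $\pi^1_\Phi(B)=\int_B\Phi(x,T(x))\,d\mu(x)$ directly, while you apply it with $H=\varphi\Phi$ to verify the defining equation~\eqref{eq:KichDivOperator}. One small caveat: Theorem~\ref{thm:RadonNikodymFirstMarginal} as stated does not contain a uniqueness clause, so your final identification relies on the (standard but unspoken) fact that $\int\varphi\psi\,d\mu=\int\varphi\psi'\,d\mu$ for all $\varphi\in\mathscr{C}_c(X)$ forces $\psi=\psi'$ $\mu$-a.e.; the paper sidesteps this by exhibiting the Radon--Nikodym derivative explicitly.
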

\begin{proof}
In order to apply Theorem~\ref{thm:RadonNikodymFirstMarginal}, we have to compute the first marginal $\pi^1_\Phi$ of $\pi_\Phi$. Notice first that since for every Borel set $A$ in $X\times X$,
\begin{equation*}
\iint_{X\times X}\mathcal{X}_A(x,y) d\pi(x,y) = \pi(A) = \mu(G^{-1}(A))=\int_X \mathcal{X}_A(x,T(x)) d\mu(x),
\end{equation*}
we also have the formula
$\iint_{X\times X}\sigma(x,y) d\pi(x,y) = \int_X \sigma(x,T(x)) d\mu(x)$
for simple functions $\sigma$ and also for bounded measurable functions. Hence
\begin{equation*}
\pi_\Phi(A)=\iint_A \Phi d\pi=\iint_{X\times X}\mathcal{X}_A \Phi d\pi = \int_X \mathcal{X}_A(x,T(x)) \Phi(x,T(x)) d\mu(x).
\end{equation*}
So that
\begin{equation*}
\pi^1_\Phi(B)=\pi_\Phi(B\times X)=\int_B \Phi(x,T(x)) d\mu(x).
\end{equation*}
Hence
\begin{equation*}
Kir_\pi \Phi(x)=\frac{d\pi^1_\Phi(x)}{d\mu}=\Phi(x,T(x)).
\end{equation*}
\end{proof}

For a function $f\in\mathscr{C}_c(X)$ the corresponding Laplacian operator is then given by
\begin{equation*}
\Delta_\pi f(x) = f(T(x))-f(x).
\end{equation*}
Or, in operational form
\begin{equation*}
\Delta_\pi = \tau - I,
\end{equation*}
where $\tau f = f\circ T$ and $I$ is the identity.

Let us now consider the diffusion problem (P) in our current situation of the determi\-nis\-tic transport of $\mu$ through $T$.
 A way to get the solution of (P) in this setting is provided by the explicit computation of $e^{t\Delta_\pi}=\sum_{k=0}^{\infty}\frac{t^k}{k!}\Delta^k_\pi$.

\begin{lemma}\label{lemma:formulaLaplacianSemigroupIterated}
Let $X$, $\mu$, $T$ and $\pi$ be as before, then
\begin{equation*}
e^{t\Delta_\pi}f = e^{-t}\sum_{l\geq 0}\frac{t^l}{l!} (f\circ T^l)
\end{equation*}
for every $f\in \mathscr{C}_c(X)$.
\end{lemma}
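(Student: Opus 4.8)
The plan is to exploit the operational identity $\Delta_\pi=\tau-I$ recorded just above, together with the fact that $\tau$ commutes with the identity. First I would note the elementary relation $\tau^l f=f\circ T^l$ for every $l\geq 0$, proved by induction: $\tau^0 f=f$, and if $\tau^l f=f\circ T^l$ then $\tau^{l+1}f=\tau(\tau^l f)=(f\circ T^l)\circ T=f\circ T^{l+1}$. Thus the iterates of $\tau$ are precisely the compositions with the iterates of $T$.

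Since $I$ commutes with $\tau$, the binomial theorem applies to $\Delta_\pi^k=(\tau-I)^k$, yielding
\begin{equation*}
\Delta_\pi^k f=\sum_{j=0}^k\binom{k}{j}(-1)^{k-j}\tau^j f=\sum_{j=0}^k\binom{k}{j}(-1)^{k-j}(f\circ T^j).
\end{equation*}
Substituting this into the defining series $e^{t\Delta_\pi}f=\sum_{k\geq 0}\tfrac{t^k}{k!}\Delta_\pi^k f$ gives a double sum over $k$ and $j\leq k$. The core of the argument is to interchange the order of summation and collect the terms by the power $T^j$: the coefficient of $f\circ T^j$ becomes $\sum_{k\geq j}\tfrac{t^k}{k!}\binom{k}{j}(-1)^{k-j}$. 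Using $\tfrac{1}{k!}\binom{k}{j}=\tfrac{1}{j!(k-j)!}$ and reindexing $m=k-j$, this collapses to $\tfrac{t^j}{j!}\sum_{m\geq 0}\tfrac{(-t)^m}{m!}=\tfrac{t^j}{j!}e^{-t}$, which is exactly the asserted formula.

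The one point that genuinely needs care is the legitimacy of interchanging the two summations, that is, the absolute convergence of the double series. Here I would use that $\tau$ is a contraction for the supremum norm on bounded Borel functions, since $\norm{\tau^j f}_\infty=\norm{f\circ T^j}_\infty\leq\norm{f}_\infty$; this holds even though $T$ is merely Borel measurable, as $f\in\mathscr{C}_c(X)$ is in particular bounded. Consequently
\begin{equation*}
\sum_{k\geq 0}\sum_{j=0}^k\frac{t^k}{k!}\binom{k}{j}\norm{f\circ T^j}_\infty\leq\norm{f}_\infty\sum_{k\geq 0}\frac{t^k}{k!}\,2^k=e^{2t}\norm{f}_\infty<\infty,
\end{equation*}
so the rearrangement is justified pointwise (indeed uniformly) by Fubini's theorem for absolutely convergent series. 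The same bound shows that the original exponential series converges in supremum norm, so $e^{t\Delta_\pi}f$ is a well-defined bounded function.

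An essentially equivalent route would be to factor the semigroup directly as $e^{t\Delta_\pi}=e^{t(\tau-I)}=e^{-t}e^{t\tau}$, which is legitimate precisely because $\tau$ and $I$ commute, and then to expand $e^{t\tau}f=\sum_{l\geq 0}\tfrac{t^l}{l!}\tau^l f=\sum_{l\geq 0}\tfrac{t^l}{l!}(f\circ T^l)$. I expect the main obstacle in either approach to be purely this convergence bookkeeping; the algebra itself is immediate once $\tau^l f=f\circ T^l$ is in hand.
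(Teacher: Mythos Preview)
Your proposal is correct and follows essentially the same route as the paper: expand $\Delta_\pi^k=(\tau-I)^k$ by the binomial theorem, substitute into the exponential series, swap the order of summation, and collapse the inner sum to $e^{-t}$. If anything, you are more careful than the paper, which merely asserts absolute convergence without writing down the $e^{2t}\norm{f}_\infty$ bound you supply.
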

\begin{proof}
Since $f$ is bounded, we see that the series above is absolutely convergent and that the $L^\infty$-norm of the right hand side is bounded by the $L^\infty$-norm of $f$. Since
\begin{equation*}\label{eq:laplacianIterated}
\Delta^k_\pi f = \sum_{l=0}^k \binom{k}{l}(-1)^{k-l} f\circ T^l,
\end{equation*}
we have
\begin{align*}
e^{t\Delta_\pi} f &= \sum_{k\geq 0}\frac{t^k}{k!}\Delta_\pi^k f\\
&=  \sum_{k\geq 0}\frac{t^k}{k!}\sum_{l=0}^k \binom{k}{l}(-1)^{k-l} f\circ T^l\\
&= \sum_{l\geq 0}f\circ T^l\sum_{k\geq l}\frac{(-1)^{k-l}}{k!}\binom{k}{l} t^k\\
&= \sum_{l\geq 0}\frac{t^l}{l!}f\circ T^l\sum_{k\geq l}\frac{(-1)^{k-l}}{(k-l)!} t^{k-l}\\
&= e^{-t}\sum_{l\geq 0}\frac{t^l}{l!}f\circ T^l.
\end{align*}
\end{proof}
Hence in the case of the Laplacian provided by a deterministic coupling of measures, the solution of the initial problem for the equation with initial data $f$, has a really wide diversity of steady states depending on the dynamics induced by the iterated system $\{T^l: l\geq 0\}$. In the next examples we only aim to illustrate this fact. For the case of ergodic mappings $T$, where $\lim_{t\to\infty}u(x,t)$ is a mean value of the initial condition, we mention a preliminary result due to F.~J.~ Mart\'in-Reyes \cite{MartinReyespreprint}. Nevertheless, perhaps more interesting from the point of view of the steady state as a classifier of coupling and transports are some particular non-ergodic cases as those considered in the next particular cases.
\begin{proposition}
Let $X=[-\tfrac{1}{2},\tfrac{1}{2}]$, $d\mu_1=dx$, $T(x)= -x$. Hence for every continuous function $f$ defined on $[-\tfrac{1}{2},\tfrac{1}{2}]$, its even part $f_e$ is the steady state of $e^{t\Delta}f$. Precisely
\begin{equation*}
e^{t\Delta}f\to f_e = \frac{f\circ T + f}{2}, \quad t\to \infty,
\end{equation*}
uniformly on $[-\tfrac{1}{2},\tfrac{1}{2}]$.
\end{proposition}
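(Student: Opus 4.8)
The plan is to exploit the explicit semigroup formula from Lemma~\ref{lemma:formulaLaplacianSemigroupIterated} together with the fact that $T$ is an involution. First I would observe that since $T(x)=-x$, we have $T^2=\mathrm{id}$, so that $T^l=\mathrm{id}$ for even $l$ and $T^l=T$ for odd $l$. Consequently $f\circ T^l$ takes only the two values $f$ (for $l$ even) and $f\circ T$ (for $l$ odd), and the orbit of each point under $T$ is two-periodic.

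Next I would substitute this into $e^{t\Delta}f=e^{-t}\sum_{l\geq 0}\frac{t^l}{l!}(f\circ T^l)$ and group the even and odd terms, which is legitimate because the series converges absolutely (as already noted in the proof of Lemma~\ref{lemma:formulaLaplacianSemigroupIterated}). This yields
\begin{equation*}
e^{t\Delta}f = e^{-t}\left( f\sum_{l\text{ even}}\frac{t^l}{l!} + (f\circ T)\sum_{l\text{ odd}}\frac{t^l}{l!}\right).
\end{equation*}
I would then recognize the even and odd parts of the exponential series as $\cosh t$ and $\sinh t$, respectively, and use $e^{-t}\cosh t=\tfrac12(1+e^{-2t})$ together with $e^{-t}\sinh t=\tfrac12(1-e^{-2t})$ to arrive at the closed form
\begin{equation*}
e^{t\Delta}f = \frac{1+e^{-2t}}{2}\,f + \frac{1-e^{-2t}}{2}\,(f\circ T).
\end{equation*}

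Finally, letting $t\to\infty$ the two coefficients tend to $\tfrac12$, giving the pointwise limit $\tfrac12(f+f\circ T)=f_e$. To upgrade this to uniform convergence I would estimate the difference explicitly, namely $e^{t\Delta}f-f_e=\tfrac{e^{-2t}}{2}(f-f\circ T)$, whose supremum norm is bounded by $e^{-2t}\norm{f}_\infty$, and this tends to $0$ independently of $x$. I do not anticipate any serious obstacle here: the entire argument hinges on the single observation that the involution $T$ collapses the infinite sum of iterates into hyperbolic functions. The only point requiring minor care is the interchange of summation with the even/odd grouping, which is justified by the absolute convergence guaranteed by the boundedness of $f$.
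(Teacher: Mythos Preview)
Your argument is correct and follows essentially the same route as the paper: both invoke Lemma~\ref{lemma:formulaLaplacianSemigroupIterated} and exploit that $T$ is an involution to collapse the series to a closed form. The only cosmetic difference is that the paper splits $f=f_e+f_o$ and sums to get $e^{t\Delta}f=f_e+e^{-2t}f_o$, whereas you split the index set into even and odd $l$ to reach the algebraically identical expression $\tfrac{1+e^{-2t}}{2}f+\tfrac{1-e^{-2t}}{2}(f\circ T)$; your explicit bound $\lVert e^{t\Delta}f-f_e\rVert_\infty\le e^{-2t}\lVert f\rVert_\infty$ makes the uniform convergence slightly more explicit than in the paper.
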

\begin{proof}
Let us apply Lemma~\ref{lemma:formulaLaplacianSemigroupIterated} in the current setting. Note that $f\circ T^l(x)=f((-1)^lx)$. Hence, with $f=f_e+f_o$ and $f_o=\frac{f(x)-f(-x)}{2}$ the odd part of $f$,
\begin{align*}
e^{t\Delta}f(x) &= e^{-t}\sum_{l\geq 0}\frac{t^l}{l!} f((-1)^l x)\\
&= e^{-t}\sum_{l\geq 0}\frac{t^l}{l!}\left(f_e(x) + f_o((-1)^l x)\right)\\
&= e^{-t}\left(\sum_{l\geq 0}\frac{t^l}{l!}\right) f_e(x) + e^{-t}\left(\sum_{l\geq 0}\frac{(-t)^l}{l!}\right)f_o(x)\\
&= f_e(x) + e^{-2t} f_o(x).
\end{align*}
\end{proof}

The next result deals with the Cantor function in $[0,1]$.
\begin{proposition}
	Let $X=[0,1]$, $T$ the Cantor function and $\mu$ the Hausdorff probability measure supported in the Cantor set contained in $[0,1]$. Then $\pi=\mu\circ G^{-1}$, with $G(x)=(x,Tx)$ is a coupling between $\mu$ and $dx$ in $[0,1]$. Then, the steady state of the solution of (P) is given by the function $g=f\circ T$.
\end{proposition}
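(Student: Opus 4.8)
The plan is to prove the two assertions in turn: first that $\pi=\mu\circ G^{-1}$ is a coupling of $\mu$ and Lebesgue measure $dx$, and then to identify the long-time limit of $u(x,t)=e^{t\Delta_\pi}f(x)$. For the coupling, recall that for a deterministic coupling $\pi=\mu\circ G^{-1}$ with $G(x)=(x,T(x))$ one always has $\pi^1=\mu$, so the only point to check is that the second marginal $\nu(B)=\mu(T^{-1}(B))$ equals $dx$; that is, that the Cantor function pushes the Cantor measure forward to Lebesgue measure. I would verify $T_*\mu=dx$ on the dyadic intervals $[k2^{-j},(k+1)2^{-j})$, which generate the Borel $\sigma$-algebra. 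Using the ternary coding of the Cantor set, a point $x=\sum_{k\ge 1}a_k3^{-k}$ with $a_k\in\set{0,2}$ is sent by $T$ to $\sum_{k\ge 1}(a_k/2)2^{-k}$, so the $T$-preimage of a level-$j$ dyadic interval meets the Cantor set in a single level-$j$ cylinder of the Cantor construction, which carries $\mu$-mass exactly $2^{-j}$ (the complementary gaps being $\mu$-null). Hence $T_*\mu$ and $dx$ agree on dyadic intervals, and a monotone class (or $\pi$–$\lambda$) argument upgrades this to all Borel sets, proving the coupling claim.

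For the steady state I would start from the explicit semigroup formula of Lemma~\ref{lemma:formulaLaplacianSemigroupIterated}, namely $u(x,t)=e^{t\Delta_\pi}f(x)=e^{-t}\sum_{l\ge 0}\tfrac{t^l}{l!}f(T^l(x))$, valid for $f\in\mathscr{C}_c(X)$ since $\Delta_\pi f=f\circ T-f$. This right-hand side is the Poisson average $\mathbb{E}[f(T^{N_t}(x))]$ with $N_t$ Poisson of mean $t$, and the steady state is its limit as $t\to\infty$. The clean tool here is the Abelian theorem for Poisson means: if the sequence $f(T^l(x))$ converges as $l\to\infty$, then $e^{-t}\sum_{l\ge 0}\tfrac{t^l}{l!}f(T^l(x))$ converges to the same limit, since the mass of the Poisson distribution concentrates on indices $l\approx t\to\infty$. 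Thus the problem reduces entirely to the asymptotics of the iterates $T^l(x)$, and the target $g=f\circ T$ would follow at once from $f(T^l(x))=f(T(x))$ for every $l\ge 1$.

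The heart of the matter, and what I expect to be the main obstacle, is therefore the behaviour of the iterated Cantor function. The most economical route to $g=f\circ T$ is the idempotence identity $T\circ T=T$, since then $T^l=T$ for every $l\ge 1$ and the series telescopes to $u(x,t)=e^{-t}f(x)+(1-e^{-t})\,f(T(x))\to f(T(x))$. I would attempt to establish this fixed-image property $T(T(x))=T(x)$ through the codings above, showing that $y=T(x)$ is fixed by $T$. This step is delicate precisely because $T(y)=y$ holds only on the fixed set of $T$, so the argument must exploit the monotone self-map structure of $T$: one has $T([0,\tfrac12])=[0,\tfrac12]$ and $T([\tfrac12,1])=[\tfrac12,1]$ with $T(x)\ge x$ on the left half and $T(x)\le x$ on the right half, so each orbit $\set{T^l(x)}_{l\ge 0}$ is monotone and converges to a fixed point of $T$. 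Pinning down this limit and reconciling it with the claimed steady state $f\circ T$ is the crux of the proof, and the point at which the analysis must be carried out with the greatest care.
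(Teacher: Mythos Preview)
Your treatment of the coupling claim is sound and parallels the paper's: you check $T_{*}\mu=dx$ via the ternary/binary coding on dyadic intervals, while the paper does it on intervals with non-dyadic endpoints; either route works.

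The genuine gap is in the steady-state part, and you have located it precisely. The idempotence route fails outright: the Cantor function is \emph{not} idempotent. For example $T(1/9)=1/4$, $T(1/4)=1/3$, $T(1/3)=1/2$, so $T^{2}(1/9)=1/3\neq 1/4=T(1/9)$. Your fallback monotone-orbit argument is correct as stated---one does have $T(x)>x$ on $(0,\tfrac12)$ and $T(x)<x$ on $(\tfrac12,1)$, so every orbit is monotone and converges to a fixed point---but it does not deliver the claimed limit: the only fixed points of $T$ are $0,\tfrac12,1$, so $T^{l}(x)\to\tfrac12$ for every $x\in(0,1)$, and your Poisson/Abelian step then gives $u(x,t)\to f(\tfrac12)$ rather than $f(T(x))$. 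Thus the difficulty you flagged as ``the crux'' is not merely delicate; it actually blocks the stated conclusion $g=f\circ T$.

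For context, the paper's own argument founders at the same point. It asserts that the dyadic value $k2^{-j}$ assumed by $T$ on a deleted interval satisfies $T(k2^{-j})=k2^{-j}$, and from this concludes $T^{l}(x)=T(x)$ for all $l\ge 1$. That fixed-point assertion is false whenever $k2^{-j}\notin\{0,\tfrac12,1\}$ (e.g.\ $T(1/4)=1/3$), so the paper's computation $u(x,t)=e^{-t}[f(x)-f(T(x))]+f(T(x))$ is not justified beyond the first deleted interval $(\tfrac13,\tfrac23)$. Your proposal is therefore not missing an idea that the paper supplies; rather, both run into the same obstruction, and the pointwise steady state the analysis actually yields is the constant $f(\tfrac12)$ on $(0,1)$.
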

\begin{proof}
	Since $T$ is continuous, the uniform probability $\mu$ on the Cantor set is given on intervals by  $\mu([a,b))=T(b)-T(a)$. Let $\nu=\mu\circ T^{-1}$. For $[c,d]$ in $[0,1]$ with $c$ and $d$ that do not belong to the set of dyadic numbers $\{k2^{-j}: j\geq 0; k=0,\ldots,2^j-1\}$ we have that $T^{-1}(\{c\})$ and $T^{-1}(\{d\})$ are singletons, or $\alpha=T^{-1}(c)$, $\beta=T^{-1}(d)$. Then $\nu([c,d])=\mu(T^{-1}([c,d]))=\mu([T^{-1}(c),T^{-1}(d)])=T(T^{-1}(d))-T(T^{-1}(c))=d-c$. Since the complement of the dyadic numbers of $[0,1]$ is dense in $[0,1]$, we get that $d\nu=dx$. On the other hand, the first and second marginals of $\pi$ are $\mu$ and $\nu$ respectively. The solution of
	\begin{equation*}
	\left\{
	\begin{array}{ll}
	\frac{\partial u}{\partial t} = \Delta_\pi u,\, & x\in [0,1), t>0, \\
	u(x,0)=f(x) \, & x\in [0,1).
	\end{array}
	\right.
	\end{equation*}
with $f$ continuous on $[0,1]$ is given by $u(x,t)=e^{-t}\sum_{l\geq 0} \frac{t^l}{l!} f\circ T^l(x)$. Let us compute $T^l(x)$ for $x\in [0,1]$. Assume that $x\in L^j_k$ the $k$-th middle third deleted in the $j$-th approximation of the Cantor set. The central point of $L^j_k$ is $k2^{-j}$ and $T(k2^{-j})=k2^{-j}$. Hence $T(x)=k2^{-j}$, $T^2(x)=T(k2^{-j})=k2^{-j}$. So that $T^l(x)=x$ for $l=0$ and $T^l(x)=k2^{-j}$ for every $l\geq 1$. Thus
\begin{align*}
u(x,t) &= e^{-t}\left[f(x)+ \left(\sum_{l\geq 1}\frac{t^l}{l!}\right)f(k2^{-j})\right]\\
&= e^{-t} f(x) + e^{-t}(e^t-1) f(k2^{-j})\\
&= e^{-t} [f(x)-f(k2^{-j})] + f(k2^{-j}).
\end{align*}	
Which tends to $f(k2^{-j})=f(T(x))$ for $t\to\infty$.
\end{proof}

\providecommand{\bysame}{\leavevmode\hbox to3em{\hrulefill}\thinspace}
\providecommand{\MR}{\relax\ifhmode\unskip\space\fi MR }
\providecommand{\MRhref}[2]{%
	\href{http://www.ams.org/mathscinet-getitem?mr=#1}{#2}
}
\providecommand{\href}[2]{#2}



\bigskip

\bigskip
\noindent{\footnotesize
\textsc{Instituto de Matem\'{a}tica Aplicada del Litoral, UNL, CONICET.}

\smallskip
\noindent\textmd{CCT CONICET Santa Fe, Predio ``Alberto Cassano'', Colectora Ruta Nac.~168 km 0, Paraje El Pozo, S3007ABA Santa Fe, Argentina.}
}
\bigskip

\end{document}